\newtheorem{theorem}{Theorem}
\newtheorem{definition}[theorem]{Definition}
\newtheorem{example}[theorem]{Example}
\newtheorem{remark}[theorem]{Remark}
\newenvironment{proof}[1][Proof]{\noindent\textbf{#1.} }{\ \rule{0.5em}{0.5em}}
\begin{document}

\title{The Generalized Stokes theorem for $\mathbb{R}$-linear forms\\
on Lie algebroids}
\author{Bogdan Balcerzak}
\date{{\small Institute of Mathematics, Technical University of \L \'{o}d%
\'{z}}\\
{\small W\'{o}lcza\'{n}ska 215, 90-924 \L \'{o}d\'{z}, Poland, e-mail:
bogdan.balcerzak@p.lodz.pl}}
\maketitle

\begin{abstract}
The author presents the generalized Stokes theorem for $\mathbb{R}$-linear
forms on Lie algebroids (which can be non-local). We apply the Stokes
formula on forms to prove that two homotopic homomorphisms of Lie algebroids
implies the existence of a chain operator joining their pullback operators.
\end{abstract}

\ \vspace{1cm}

\hspace{-0.6cm}\textbf{Keywords}: Lie algebroid, homomorphisms of Lie
algebroids, homotopic homomorphisms of Lie algebroids, Lie algebroid
cohomology\vspace{0.3cm}

\hspace{-0.6cm}\textbf{Mathematics Subject Classification (2010)}\textsc{: }%
58H05, 17B56, 58A10\vspace{1.6cm}

\section{Introduction}

Some authors, e.g. Evens, Lu, Weinstein in \cite{Evens-Lu-Weinstein},
Crainic, Fernandes in \cite{Crainic-up to homotopy}, \cite%
{Crainic-Fernandes-jets}, proposed the use of some $\mathbb{R}$-linear
connections and $\mathbb{R}$-linear forms to examine some characteristic
classes. In \cite{Evens-Lu-Weinstein} the authors define the modular class
of the Lie algebroid using some $\mathbb{R}$-linear connection, namely the
adjoint representation, and introduce the more general notion of a
representation up to homotopy. Such connections and so-called non-linear
forms were used by Crainic and Fernandes to study secondary characteristic
classes on Lie algebroids in the more general context. Using the definition
of Grabowski-Marmo-Michor it was shown in \cite{Balcerzak} that the modular
class of a base-preserving homomorphism of Lie algebroids is the
Chern-Simons form for a pair of $\mathbb{R}$-linear connections determined
by some distinguished divergences.

It was the motivation to investigate whether the classical Stokes' theorem
extends to $\mathbb{R}$-linear forms. In this text we present the Stokes
formula for $\mathbb{R}$-linear forms on Lie algebroids which are more
general than usual and non-linear forms. The difficulty lies in that we
cannot use a local property for $\mathbb{R}$-linear forms. Moreover, we
formulate suitable results for (linear) differential forms on Lie
algebroids, which was stated by I.~Vaisman in \cite[2010]{Vaisman}. These
generalize the known result for tangent bundles given by Bott \cite{Bott}.
We apply this result to homotopic homomorphism of Lie algebroids giving a
generalization of the result for regular Lie algebroids from \cite%
{Kubarski-invariant}. Namely, we prove that two homotopic homomorphisms of
(arbitrary) Lie algebroids implies the existence of a chain operator joining
their pullback operators.

\section{Forms on Lie Algebroids}

By a \emph{Lie algebroid} we mean a triple $\left( A,\rho _{A},[\![\cdot
,\cdot ]\!]_{A}\right) $, in which $A$ is a real vector bundle over a
manifold $M$, $\rho _{A}:A\rightarrow TM$ is a homomorphism of vector
bundles called an \emph{anchor}, $\left( \Gamma \left( A\right) ,[\![\cdot
,\cdot ]\!]_{A}\right) $ is an $\mathbb{R}$-Lie algebra and the Leibniz
identity%
\begin{equation*}
\lbrack \![a,f\cdot b]\!]_{A}=f\cdot \lbrack \![a,b]\!]_{A}+\rho _{A}\left(
a\right) \left( f\right) \cdot b\ \ \ \ \ \text{for all\ \ \ \ }a,b\in
\Gamma \left( A\right) ,\ f\in C^{\infty }\left( M\right)
\end{equation*}%
holds (\cite{Pradines}). The anchor induces a homomorphism of Lie algebras $%
\limfunc{Sec}\rho _{A}:\Gamma \left( A\right) \rightarrow \mathfrak{X}\left(
M\right) $, $a\mapsto \rho _{A}\circ a$, because the representation $\varrho
:C^{\infty }\left( M\right) \rightarrow \limfunc{End}\nolimits_{C^{\infty
}\left( M\right) }\left( \Gamma \left( A\right) \right) $ given by $\varrho
\left( \nu \right) \left( a\right) =\nu \cdot a$ for all $\nu \in C^{\infty
}\left( M\right) $,$\ a\in \Gamma \left( A\right) $, is faithful (see \cite%
{Herz}). If $\rho _{A}$ is a constant rank (i.e. $\func{Im}\rho _{A}$ is a
constant dimensional and completely integrable distribution), we say that $%
\left( A,\rho _{A},[\![\cdot ,\cdot ]\!]_{A}\right) $ is \emph{regular}. By
a \emph{homomorphism of Lie algebroids} $\left( A,\rho _{A},[\![\cdot ,\cdot
]\!]_{A}\right) $, $\left( B,\rho _{B},[\![\cdot ,\cdot ]\!]_{B}\right) $,
both over the same manifold $M$, we mean a homomorphism of vector bundles $%
\Phi :A\rightarrow B$ over $\limfunc{id}\nolimits_{M}$ such that $\rho
_{B}\circ \Phi =\rho _{A}$ and $\Phi \circ \lbrack \![a,b]\!]_{A}=[\![\Phi
\circ a,\Phi \circ b]\!]_{B}$ for all $a,b\in \Gamma \left( A\right) $. We
say that two Lie algebroids are \emph{isomorphic} if there exists their
homomorphism which is an isomorphism of vector bundles. For more information
about Lie algebroids we refer the reader to \cite{Higgins-Mackenzie}, \cite%
{Mackenzie}, \cite{Kubarski-Lyon}.

Let $\left( A,\rho _{A},[\![\cdot ,\cdot ]\!]_{A}\right) $ be a Lie
algebroid on a manifold $M$. By an $n$-differential form on $A$ we mean a
section $\eta \in \Gamma \left( \bigwedge\nolimits^{n}A^{\ast }\right) $. In
the space $\Omega \left( A\right) =\bigoplus\limits_{n\geq 0}\Gamma \left(
\bigwedge\nolimits^{n}A^{\ast }\right) $ we have the exterior differential
operator $d_{A}$ given by the classical formula%
\begin{eqnarray}
\left( d_{A}\eta \right) \left( a_{1},\ldots ,a_{n+1}\right)
&=&\dsum\limits_{i=1}^{n+1}\left( -1\right) ^{i+1}\left( \rho _{A}\circ
a_{i}\right) \left( \eta \left( a_{1},\ldots \widehat{i}\ldots ,a_{n}\right)
\right)  \label{d_A} \\
&&+\dsum\limits_{i<j}\left( -1\right) ^{i+j}\eta \left( \lbrack
\![a_{i},a_{j}]\!]_{A},a_{1},\ldots \widehat{i}\ldots \widehat{j}\ldots
,a_{n+1}\right)  \notag
\end{eqnarray}%
for all $n\geq 1$, $\eta \in \Omega ^{n}\left( A\right) $, $a_{1},\ldots
,a_{n+1}\in \Gamma \left( A\right) $ and $d_{A}\left( f\right) \left(
a\right) =\left( \rho _{A}\circ a\right) \left( f\right) $ for $f\in \Omega
^{0}\left( A\right) =C^{\infty }\left( M\right) $, $a\in \Gamma \left(
A\right) $. The cohomology space of the complex $\left( \Omega _{C^{\infty
}\left( M\right) }\left( A\right) ,d_{A}\right) $ is called the \emph{%
cohomology space of the Lie algebroid} $A$, and is denoted by $H^{\bullet
}\left( A\right) $.

We will extend $d_{A}$ to $\mathbb{R}$-linear forms. Let $\left( A,\rho
_{A},[\![\cdot ,\cdot ]\!]_{A}\right) $ be a Lie algebroid on a manifold $M$%
. An $\mathbb{R}$-multilinear, antisymmetric map%
\begin{equation*}
\omega :\Gamma \left( A\right) \times \cdots \times \Gamma \left( A\right)
\longrightarrow C^{\infty }\left( M\right)
\end{equation*}%
is called an $\mathbb{R}$\emph{-linear form} on $A$. The space of all $%
\mathbb{R}$-linear $n$-forms on $A$ will be denoted by $\mathcal{A}lt_{%
\mathbb{R}}^{n}\left( \Gamma \left( A\right) ;C^{\infty }\left( M\right)
\right) $, and the space of $\mathbb{R}$-linear forms on $A$ by $\mathcal{A}%
lt_{\mathbb{R}}^{\bullet }\left( \Gamma \left( A\right) ;C^{\infty }\left(
M\right) \right) =\bigoplus\limits_{n\geq 0}\mathcal{A}lt_{\mathbb{R}%
}^{n}\left( \Gamma \left( A\right) ;C^{\infty }\left( M\right) \right) $,
where $\mathcal{A}lt_{\mathbb{R}}^{0}\left( \Gamma \left( A\right)
;C^{\infty }\left( M\right) \right) =C^{\infty }\left( M\right) $. We extend
the exterior multiplication of differential forms on the Lie algebroid to
the space $\mathcal{A}lt_{\mathbb{R}}^{\bullet }\left( \Gamma \left(
A\right) ;C^{\infty }\left( M\right) \right) $, obtaining the structure of
an algebra and extend $d_{A}$ to a differential operator%
\begin{equation*}
d_{A,\mathbb{R}}:\mathcal{A}lt_{\mathbb{R}}^{\bullet }\left( \Gamma \left(
A\right) ;C^{\infty }\left( M\right) \right) \longrightarrow \mathcal{A}lt_{%
\mathbb{R}}^{\bullet +1}\left( \Gamma \left( A\right) ;C^{\infty }\left(
M\right) \right)
\end{equation*}%
by the same formula as in (\ref{d_A}).

Observe that for a Lie algebroid $\left( A,\rho _{A},[\![\cdot ,\cdot
]\!]_{A}\right) $ over a compact orientable manifold $M$ with a volume form $%
\Omega $ every form $\eta \in \Omega ^{n}\left( A\right) $ on $A$ defines an 
$\mathbb{R}$-linear form%
\begin{eqnarray*}
\widetilde{\eta } &\in &\mathcal{A}lt_{\mathbb{R}}^{n}\left( \Gamma \left(
A\right) ;C^{\infty }\left( M\right) \right) , \\
\widetilde{\eta }\left( a_{1},\ldots ,a_{n}\right) &=&\int_{M}\eta \left(
a_{1},\ldots ,a_{n}\right) \Omega ,
\end{eqnarray*}%
$a_{1},\ldots ,a_{n}\in \Gamma \left( A\right) $, which is, in general,
nonlocal.

\section{A Few Words About Homomorphisms of Lie Algebroids and the Pullback
of Forms}

Let $\left( A,\rho _{A},[\![\cdot ,\cdot ]\!]_{A}\right) $ and $\left(
B,\rho _{B},[\![\cdot ,\cdot ]\!]_{B}\right) $ be Lie algebroids over the
same manifold $M$, $\Phi :A\rightarrow B$ a homomorphism of Lie algebroids
over the identity. We define an operator of zero degree as follows:%
\begin{equation*}
\Phi ^{\ast }:\mathcal{A}lt_{\mathbb{R}}^{\bullet }\left( \Gamma \left(
B\right) ;C^{\infty }\left( M\right) \right) \longrightarrow \mathcal{A}lt_{%
\mathbb{R}}^{\bullet }\left( \Gamma \left( A\right) ;C^{\infty }\left(
M\right) \right) ,
\end{equation*}%
\begin{equation*}
\Phi ^{\ast }\left( \omega \right) \left( a_{1},\ldots ,a_{n}\right) =\omega
\left( \Phi \circ a_{1},\ldots ,\Phi \circ a_{n}\right)
\end{equation*}%
for all $\omega \in \mathcal{A}lt_{\mathbb{R}}^{n}\left( \Gamma \left(
B\right) ;C^{\infty }\left( M\right) \right) $, $n\geq 1$, $a_{1},\ldots
,a_{n}\in \Gamma \left( A\right) $, and $\Phi ^{\ast }\left( h\right)
=h\circ \Phi $ for $h\in C^{\infty }\left( M\right) $; the form $\Phi ^{\ast
}\left( \omega \right) $ is called the \emph{pullback} of $\omega $ via $%
\Phi $. Since $\rho _{B}\circ \Phi =\rho _{A}$ and $\Phi $ preserves
brackets, we see that%
\begin{equation}
\Phi ^{\ast }\circ d_{\mathbb{R}}^{B}=d_{\mathbb{R}}^{A}\circ \Phi ^{\ast }.
\label{przem_cofania_z_rozniczka}
\end{equation}%
Now we recall the definition of a homomorphism of Lie algebroids (Higgins
and Mackenzie, \cite{Higgins-Mackenzie}) which extends the notion of a Lie
algebroid homomorphism over the identity map and the definition of a
pullback of ($C^{\infty }\left( M\right) $-linear) \text{forms of the Lie
algebroid (Kubarski, \cite{Kubarski-invariant}).}

\begin{definition}
\emph{By a} homomorphism%
\begin{equation*}
\Phi :\left( A,\rho _{A},[\![\cdot ,\cdot ]\!]_{A}\right) \longrightarrow
\left( B,\rho _{B},[\![\cdot ,\cdot ]\!]_{B}\right)
\end{equation*}%
\emph{of Lie algebroids }$\left( A,\rho _{A},[\![\cdot ,\cdot
]\!]_{A}\right) $\emph{\ and }$\left( B,\rho _{B},[\![\cdot ,\cdot
]\!]_{B}\right) $\emph{, where the first is over a manifold }$M$\emph{, the
second over a manifold }$N$\emph{, we mean a homomorphism of vector bundles }%
$\Phi :A\rightarrow B$\emph{\ over }$f:M\rightarrow N$\emph{\ such that }$%
\rho _{B}\circ \Psi =f_{\ast }\circ \rho _{A}$\emph{, and for all
cross-sections }$a,b\in \Gamma \left( A\right) $\emph{\ with }$\Phi $\emph{%
-decompositions }$\Phi \circ a=\sum\nolimits_{i}f_{a}^{i}\left( \sigma
_{i}\circ f\right) $\emph{, }$\Phi \circ b=\sum\nolimits_{j}f_{b}^{j}\left(
\varepsilon _{j}\circ f\right) $\emph{, where }$f_{a}^{i},f_{b}^{j}\in
C^{\infty }\left( M\right) $\emph{, }$\sigma _{i},\varepsilon _{j}\in \Gamma
\left( B\right) $\emph{, we have}%
\begin{eqnarray*}
\Phi \circ \lbrack \![a,b]\!]_{A}
&=&\sum\limits_{i,j}f_{a}^{i}f_{b}^{j}\left( [\![\sigma _{i},\varepsilon
_{j}]\!]_{A}\circ f\right) +\sum\limits_{j}\left( \rho _{A}\circ a\right)
\left( f_{b}^{j}\right) \cdot \left( \varepsilon _{j}\circ f\right) \\
&&-\sum\limits_{i}\left( \rho _{B}\circ b\right) \left( f_{b}^{i}\right)
\cdot \left( \sigma _{i}\circ f\right) .
\end{eqnarray*}
\end{definition}

\begin{definition}
\emph{Let }$\Phi $\emph{\ be a homomorphism of Lie algebroids }$\left(
A,\rho _{A},[\![\cdot ,\cdot ]\!]_{A}\right) $\emph{\ and }$\left( B,\rho
_{B},[\![\cdot ,\cdot ]\!]_{B}\right) $\emph{\ over }$f:M\rightarrow N$\emph{%
. A }pullback\emph{\ of a form }$\eta \in \Omega ^{n}\left( B\right) $\emph{%
\ is a form }$\Phi ^{\ast }\eta \in \Omega ^{n}\left( A\right) $\emph{\ such
that}%
\begin{equation*}
\Phi ^{\ast }\eta \left( x;\nu _{1}\wedge \ldots \wedge \nu _{n}\right)
=\eta \left( f\left( x\right) ;\Phi \nu _{1}\wedge \ldots \wedge \Phi \nu
_{n}\right)
\end{equation*}%
\emph{for all }$x\in M$\emph{, }$\nu _{1},\ldots ,\nu _{n}\in A_{|x}$\emph{.}
\end{definition}

A homomorphism $\Phi :\left( A,\rho _{A},[\![\cdot ,\cdot ]\!]_{A}\right)
\rightarrow \left( B,\rho _{B},[\![\cdot ,\cdot ]\!]_{B}\right) $ of Lie
algebroids induces a homomorphism of algebras $\Phi ^{\ast }:\Omega \left(
B\right) \rightarrow \Omega \left( A\right) $ such that $d_{A}\circ \Phi
^{\ast }=\Phi ^{\ast }\circ d_{B}$ (\cite{Kubarski-invariant}). Therefore, $%
\Phi $ defines the homomorphism $\Phi ^{\#}:H\left( B\right) \rightarrow
H\left( A\right) $ on cohomologies. Moreover, we see that for two
homomorphisms of Lie algebroids $\Psi :A\rightarrow B$ and $\Phi
:B\rightarrow C$ (over $f:M\rightarrow N$ and $g:N\rightarrow P$,
respectively) holds%
\begin{equation*}
\left( \Phi \circ \Psi \right) ^{\ast }=\Psi ^{\ast }\circ \Phi ^{\ast }.
\end{equation*}

\begin{theorem}
\label{theorem_the_inverse_image}\emph{(\cite%
{Kosmann-Schwarzbach-Laurent-Gengoux-Weinstein})} Let $\left( B,\rho
_{B},[\![\cdot ,\cdot ]\!]_{B}\right) $ be a Lie algebroid over a manifold $%
N $, $f:M\rightarrow N$ be a smooth map such that%
\begin{equation*}
f^{\;\wedge }\hspace{-0.1cm}\left( B\right) =\left\{ \left( \gamma ,b\right)
\in TM\times B:f_{\ast }\gamma =\rho _{B}\left( b\right) \right\}
\end{equation*}%
is a vector subbundle of $TM\oplus f^{\ast }B$ over $M$ \emph{(}$f$ \emph{is
called then} admissible\emph{)}. Then $f^{\;\wedge }\hspace{-0.1cm}\left(
B\right) $ has a Lie algebroid structure with the projection to the first
factor as an anchor and the bracket $[\![\cdot ,\cdot ]\!]^{\wedge }$
defined in the following way: for $\left( X,\overline{\zeta }\right)
,~\left( Y,\overline{\sigma }\right) \in \Gamma \left( f^{\;\wedge }\hspace{%
-0.1cm}\left( B\right) \right) $, where $X,Y\in \limfunc{Der}\left(
C^{\infty }\left( M\right) \right) $ and $\overline{\zeta },\overline{\sigma 
}\in \Gamma \left( f^{\ast }B\right) $ there exist $n\in 
\mathbb{N}
$, sections $\zeta ^{1},\ldots ,\zeta ^{n}$, $\sigma ^{1},\ldots ,\sigma
^{n} $ of $B$ and $f^{1},\ldots ,f^{n},g^{1},\ldots ,g^{n}\in C^{\infty
}\left( M\right) $, such that locally (on an open set $U\subset M$) $%
\overline{\zeta }$, $\overline{\sigma }$ are respectively of the form $%
\sum\nolimits_{p}f^{p}\cdot \left( \eta ^{p}\circ f\right) $ and $%
\sum\nolimits_{q}g^{q}\cdot \left( \sigma ^{q}\circ f\right) $, and we
define $[\![\left( X,\overline{\zeta }\right) ,~\left( Y,\overline{\sigma }%
\right) ]\!]^{\wedge }$ on $U$ by 
\begin{equation*}
\hspace{-0.2cm}\left( \left[ X,Y\right] ,\sum\limits_{p,q}f^{p}g^{q}\left(
[\![\zeta ^{p},\sigma ^{q}]\!]_{B}\circ f\right) +\sum\limits_{q}X\hspace{%
-0.1cm}\left( g^{q}\right) \cdot \left( \sigma ^{q}\circ f\right)
-\sum\limits_{p}Y\hspace{-0.1cm}\left( f^{p}\right) \cdot \left( \zeta
^{p}\circ f\right) \right) _{\hspace{-0.1cm}|U}\hspace{-0.3cm}.
\end{equation*}
\end{theorem}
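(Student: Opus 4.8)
The plan is to check three things in turn: that the displayed formula for $[\![\cdot,\cdot]\!]^{\wedge}$ does not depend on the chosen local decompositions, that its value is again a section of $f^{\;\wedge}(B)$ (i.e. satisfies the anchor constraint), and that $\bigl(f^{\;\wedge}(B),\mathrm{pr}_1,[\![\cdot,\cdot]\!]^{\wedge}\bigr)$ obeys the Lie algebroid axioms. Throughout I would use that over the preimage of a trivializing open set for $B$ a local frame $(e_{\alpha})$ of $B$ pulls back to a local frame $(e_{\alpha}\circ f)$ of $f^{\ast}B$, so every local section of $f^{\ast}B$ genuinely admits a decomposition $\sum_{p}f^{p}\,(\zeta^{p}\circ f)$ of the stated form; admissibility of $f$ is exactly what makes $f^{\;\wedge}(B)$ a vector bundle, so that $\Gamma\bigl(f^{\;\wedge}(B)\bigr)$ is the object being bracketed. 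Writing $\widehat{\sigma}:=\sigma\circ f$, I would also recall that $\Gamma(f^{\ast}B)$ is the quotient of the free $C^{\infty}(M)$-module on the symbols $\widehat{\sigma}$ by the relations $\widehat{\sigma+\tau}=\widehat{\sigma}+\widehat{\tau}$ and $\widehat{h\sigma}=(h\circ f)\,\widehat{\sigma}$ for $h\in C^{\infty}(N)$.

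First I would treat well-definedness, which I expect to be the main obstacle. Fixing $(X,\overline{\zeta})$ with $\overline{\zeta}=\sum_{p}f^{p}\,(\zeta^{p}\circ f)$, I would view the second component of the bracket as an operator $\overline{\sigma}\mapsto T(\overline{\sigma})$ on $\Gamma(f^{\ast}B)$ and verify two properties. A direct substitution gives the formal Leibniz rule $T(h\,\overline{\sigma})=h\,T(\overline{\sigma})+X(h)\,\overline{\sigma}$ for $h\in C^{\infty}(M)$, arising from the derivative terms $X(g^{q})$. Next I would check that $T$ respects the defining relation $\widehat{h\sigma}=(h\circ f)\,\widehat{\sigma}$ of the pullback module: expanding $T(\widehat{h\sigma})$ by means of the Leibniz identity for $[\![\cdot,\cdot]\!]_{B}$ produces a correction term $\bigl(\sum_{p}f^{p}\,(\rho_{B}(\zeta^{p})(h)\circ f)\bigr)\widehat{\sigma}$, which must coincide with $X(h\circ f)\,\widehat{\sigma}$. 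This equality is precisely the anchor constraint $f_{\ast}\circ X=\rho_{B}\circ\overline{\zeta}$ paired against $dh$, so it is exactly here that the definition of $f^{\;\wedge}(B)$ is used. The two checks then show that $T$ descends to a well-defined operator, independent of the decomposition of $\overline{\sigma}$. Since the displayed formula is manifestly antisymmetric under interchanging $(X,\overline{\zeta})$ and $(Y,\overline{\sigma})$, independence of the decomposition of $\overline{\zeta}$ follows from the independence already proved in the other slot.

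Next I would confirm that $\bigl([X,Y],T(\overline{\sigma})\bigr)$ again lies in $\Gamma\bigl(f^{\;\wedge}(B)\bigr)$, i.e. $f_{\ast}[X,Y]=\rho_{B}\circ T(\overline{\sigma})$: applying $\rho_{B}$ and using that it is a bracket homomorphism converts the $B$-bracket terms into Lie brackets of the pushed-forward fields, after which the identity reduces to the naturality of the vector-field bracket under $f$-relatedness, given $f_{\ast}X=\rho_{B}\circ\overline{\zeta}$ and $f_{\ast}Y=\rho_{B}\circ\overline{\sigma}$. For the axioms, $\mathbb{R}$-bilinearity and antisymmetry are read off the formula; the Leibniz identity $[\![(X,\overline{\zeta}),h\,(Y,\overline{\sigma})]\!]^{\wedge}=h\,[\![(X,\overline{\zeta}),(Y,\overline{\sigma})]\!]^{\wedge}+X(h)\,(Y,\overline{\sigma})$ holds on the first component via $[X,hY]=h[X,Y]+X(h)Y$ and on the second via the Leibniz rule for $T$ established above. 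Finally, the Leibniz identity forces the Jacobiator to be $C^{\infty}(M)$-multilinear, hence tensorial, so it suffices to test it on sections whose $f^{\ast}B$-parts are pullbacks $\sigma\circ f$, where both components collapse to the Jacobi identities of $[\,\cdot\,,\cdot\,]$ in $\mathfrak{X}(M)$ and of $[\![\cdot,\cdot]\!]_{B}$ in $\Gamma(B)$, the anchor terms cancelling. The only genuinely delicate step remains the well-definedness in the second paragraph, as it is the sole place that is not a formal manipulation and that actively invokes the hypothesis on $f$.
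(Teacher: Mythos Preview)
The paper does not supply a proof of this theorem: it is quoted, with attribution to Kosmann-Schwarzbach--Laurent-Gengoux--Weinstein, and used as input for the rest of the article. There is therefore no argument in the paper to compare yours against.

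Your outline is the standard direct verification and is largely sound. The well-definedness step is handled correctly, and you rightly identify it as the one place where the anchor constraint $f_{\ast}X=\rho_{B}\circ\overline{\zeta}$ is genuinely used; the Leibniz rule and the check that the bracket lands back in $\Gamma\bigl(f^{\;\wedge}(B)\bigr)$ go through as you describe.

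There is, however, a real gap in your treatment of the Jacobi identity. You argue that the Jacobiator is tensorial and that it therefore suffices to test it on sections $(X,\sigma\circ f)$ whose $f^{\ast}B$-component is a single pullback. But such ``simple'' sections need not span the fibres of $f^{\;\wedge}(B)$. Take $M=N=\mathbb{R}$, $B=TN$ with $\rho_{B}=\mathrm{id}$, and $f(t)=t^{2}$. Then $f$ is admissible, and a section $\bigl(a(t)\,\partial_{t},\,b(s)\,\partial_{s}\circ f\bigr)$ satisfies the anchor constraint iff $2t\,a(t)=b(t^{2})$; since the right-hand side is even in $t$, this forces $a$ to be odd, hence $a(0)=0$. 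Thus no simple section passes through $(\partial_{t}|_{0},0)\in f^{\;\wedge}(B)_{0}$, and your reduction to simple sections is unjustified. A correct argument either carries out the Jacobiator computation directly with three general local decompositions (tedious but elementary, and this is what the cited sources do), or works on the dual side by exhibiting the induced degree-one derivation on $\Omega^{\bullet}\bigl(f^{\;\wedge}(B)\bigr)$ and checking that it squares to zero.
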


\begin{definition}
\emph{Let }$\left( B,\rho _{B},[\![\cdot ,\cdot ]\!]_{B}\right) $\emph{\ be
a Lie algebroid over a manifold }$N$\emph{, }$f:M\rightarrow N$\emph{\ be a
smooth admissible map. A Lie algebroid }$\left( f^{\;\wedge }\hspace{-0.1cm}%
\left( B\right) ,\limfunc{pr}\nolimits_{1},[\![\cdot ,\cdot ]\!]^{\wedge
}\right) $\emph{\ described in the above theorem is called the }inverse image%
\emph{\ of }$B$\emph{\ via }$f$\emph{.}
\end{definition}

\begin{example}
\label{example_admissible maps}\emph{Consider a regular Lie algebroid }$%
\left( B,\rho _{B},[\![\cdot ,\cdot ]\!]_{B}\right) $\emph{\ over a manifold 
}$N$\emph{, a manifold }$M$\emph{, a subbundle }$F$\emph{\ of }$TM$\emph{.
Any smooth map }$f:M\rightarrow N$\emph{\ such that }$f_{\ast }\left(
F\right) \subset \func{Im}\rho $ \emph{is admissible. Indeed, }$f^{\;\wedge }%
\hspace{-0.1cm}\left( B\right) $\emph{\ is a vector bundle, because for
every }$x\in M$\emph{, }$f^{\;\wedge }\hspace{-0.1cm}\left( B\right)
_{|x}=\ker G_{|x}$\emph{\ where }$G:F\oplus f^{\ast }B\rightarrow \func{Im}%
\rho _{B}$\emph{\ is a morphism of vector bundles over }$f$\emph{\ given by }%
$G_{x}\left( \tau ,\beta \right) =f_{\ast }\left( \tau \right) -\rho
_{B}\left( \beta \right) $\emph{\ for all }$x\in M$\emph{, }$\tau \in F_{|x}$%
\emph{, }$\beta \in B_{|f\left( x\right) }$\emph{\ and the function }$M\ni
x\mapsto \dim \ker G_{|x}\in \mathbb{Z}$\emph{\ is constant (see also \cite%
{Kubarski-Lyon}). In particular, if }$\left( B,\rho _{B},[\![\cdot ,\cdot
]\!]_{B}\right) $\emph{\ is a Lie algebroid with a surjective anchor }$\rho
_{B}:B\rightarrow TN$\emph{\ (then we say that }$B$\emph{\ is }transitive%
\emph{), then any smooth mapping }$f:M\rightarrow N$\ \emph{is admissible.
Moreover, any surjective submersion is admissible.}
\end{example}

\begin{example}
\emph{\cite{Kosmann-Schwarzbach-Laurent-Gengoux-Weinstein} Any map }$%
f:M\rightarrow N$\emph{\ transverse to a given Lie algebroid }$\left( B,\rho
_{B},[\![\cdot ,\cdot ]\!]_{B}\right) $\emph{\ over }$M$\emph{\ (i.e. }$%
df\left( T_{x}M\right) +\rho _{B}\left( B_{f\left( x\right) }\right)
=T_{f\left( x\right) }N$\emph{\ for all }$x\in M$\emph{) is admissible.}
\end{example}

Consider a homomorphism $\Phi :\left( A,\rho _{A},[\![\cdot ,\cdot
]\!]_{A}\right) \rightarrow \left( B,\rho _{B},[\![\cdot ,\cdot
]\!]_{B}\right) $ of regular Lie algebroids over $f:M\rightarrow N$. From
the above example we see that $f$\ is admissible and $\left( f^{\;\wedge }%
\hspace{-0.1cm}\left( B\right) ,\limfunc{pr}_{1},[\![\cdot ,\cdot
]\!]^{\wedge }\right) $ is a Lie algebroid (see also \cite{Kubarski-Lyon})
with the projection on the first factor as an anchor and the bracket $%
[\![\cdot ,\cdot ]\!]^{\wedge }$ on the $C^{\infty }\left( M\right) $-module 
$\Gamma \left( f^{\;\wedge }\hspace{-0.1cm}\left( B\right) \right) \subset 
\mathfrak{X}\left( M\right) \times \Gamma \left( \func{pr}_{2}^{\ast
}A\right) \cong \mathfrak{X}\left( M\right) \times C^{\infty }\left(
M\right) \otimes _{C^{\infty }\left( M\right) }\Gamma \left( B\right) $; see 
\cite{Higgins-Mackenzie}. Then $\Phi $ can be written as a composition of
two homomorphisms of Lie algebroids%
\begin{equation*}
\Phi =\chi \circ \overline{\Phi }
\end{equation*}%
where $\overline{\Phi }:A\rightarrow f^{\;\wedge }\hspace{-0.1cm}\left(
B\right) $ is a (base-preserving) homomorphism of $A$ and the inverse--image
of $B$ via $f$ (see \cite{Kubarski-Lyon}, \cite{Kubarski-invariant}) given
by $\alpha \mapsto \left( \rho _{A}\left( \alpha \right) ,\Phi \left( \alpha
\right) \right) $, and $\chi :f^{\;\wedge }\hspace{-0.1cm}\left( B\right)
\rightarrow B$ is the projection to the second factor. Hence, the pullback
operator $\Phi ^{\ast }:\Omega \left( B\right) \rightarrow \Omega \left(
A\right) $ can be represented as a composition $\Phi ^{\ast }=\overline{\Phi 
}^{\ast }\circ \chi ^{\ast }$ where $\chi ^{\ast }:\Omega \left( B\right)
\rightarrow \Omega \left( f^{\wedge }\hspace{-0.1cm}\left( B\right) \right) 
\mathcal{\ }$and%
\begin{equation*}
\chi ^{\ast }\left( \omega \right) \left( c_{1},\ldots ,c_{n}\right)
=\dsum\limits_{i_{1},\ldots ,i_{n}}f^{i_{1}}\ldots f^{i_{n}}\left( \omega
\left( \xi ^{i_{1}},\ldots ,\xi ^{i_{n}}\right) \circ f\right)
\end{equation*}%
for all $\omega \in \Omega ^{n}\left( B\right) $, $n\geq 1$, and for all $%
c_{1}=\left( X_{1},\tsum\limits_{i_{1}}f^{i_{1}}\otimes \xi ^{i_{1}}\right) $%
,$\ldots $, $c_{n}=\left( X_{n},\tsum\limits_{i_{n}}f^{i_{n}}\otimes \xi
^{i_{n}}\right) $ from $\mathfrak{X}\left( M\right) \times C^{\infty }\left(
M\right) \otimes _{C^{\infty }\left( N\right) }\Gamma \left( B\right) $.
Moreover, $\chi ^{\ast }\left( g\right) =g\circ f$ for all $g\in C^{\infty
}\left( N\right) $.

We recall the definition of the Cartesian product of two Lie algebroids from 
\cite{Kubarski-invariant}.

\begin{definition}
\emph{The }Cartesian product of two Lie algebroids\emph{\ }$\left( A,\rho
_{A},[\![\cdot ,\cdot ]\!]_{A}\right) $\emph{\ and }$\left( B,\rho
_{B},[\![\cdot ,\cdot ]\!]_{B}\right) $\emph{\ over manifolds }$M$\emph{\
and }$N$\emph{, respectively, is the Lie algebroid}%
\begin{equation*}
\left( A\times B,\rho _{A}\times \rho _{B},[\![\cdot ,\cdot ]\!]_{A\times
B}\right)
\end{equation*}%
\emph{over }$M\times N$\emph{\ with the bracket }$[\![\cdot ,\cdot
]\!]_{A\times B}$\emph{\ in }$\Gamma \left( A\times B\right) $\emph{, given
in such a way that for all }$\overline{\sigma }=\left( \overline{\sigma }%
^{1},\overline{\sigma }^{2}\right) $\emph{, }$\overline{\eta }=\left( 
\overline{\eta }^{1},\overline{\eta }^{2}\right) \in \Gamma \left( A\times
B\right) $\emph{, }%
\begin{equation*}
\lbrack \![\overline{\sigma },\overline{\eta }]\!]_{A\times B}=\left( [\![%
\overline{\sigma },\overline{\eta }]\!]^{1},[\![\overline{\sigma },\overline{%
\eta }]\!]^{2}\right) ,
\end{equation*}%
\emph{where for all} $\left( x,y\right) \in M\times N,$\emph{\ }$[\![%
\overline{\sigma },\overline{\eta }]\!]^{1}\left( x,y\right) $ \emph{and }$%
[\![\overline{\sigma },\overline{\eta }]\!]^{2}\left( x,y\right) $ \emph{are
equal to}%
\begin{gather*}
\lbrack \![\overline{\sigma }^{1}\left( \cdot ,y\right) ,\overline{\eta }%
^{1}\left( \cdot ,y\right) ]\!]_{A}\left( x\right) +\left( \rho _{B}\circ 
\overline{\sigma }^{2}\right) _{\left( x,y\right) }\left( \overline{\eta }%
^{1}\left( x,\cdot \right) \right) -\left( \rho _{B}\circ \overline{\eta }%
^{2}\right) _{\left( x,y\right) }\left( \overline{\sigma }^{1}\left( x,\cdot
\right) \right) , \\
\lbrack \![\overline{\sigma }^{2}\left( x,\cdot \right) ,\overline{\eta }%
^{2}\left( x,\cdot \right) ]\!]_{B}\left( y\right) +\left( \rho _{A}\circ 
\overline{\sigma }^{1}\right) _{\left( x,y\right) }\left( \overline{\eta }%
^{2}\left( \cdot ,y\right) \right) -\left( \rho _{A}\circ \overline{\eta }%
^{1}\right) _{\left( x,y\right) }\left( \overline{\sigma }^{2}\left( \cdot
,y\right) \right) ,
\end{gather*}%
\emph{respectively.}
\end{definition}

\newpage

\section{The Generalized Stokes Theorem on Lie Algebroids}

In this section we will prove the Stokes formula for Lie algebroids and $%
\mathbb{R}$-linear (not necessarily local) forms, which is a generalization
of the known formula from \cite{Bott}.

Let $\left( A,\rho _{A},[\![\cdot ,\cdot ]\!]_{A}\right) $ be a Lie
algebroid on a manifold $M$. For every natural $k$, let $\func{pr}_{2}:%
\mathbb{R}^{k}\times M\rightarrow M$\ be a projection on the second factor
and 
\begin{equation*}
\Delta ^{k}=\left\{ \left( t_{1},...,t_{k}\right) \in \mathbb{R}%
^{k};\;\;\;\forall i\;\;t_{i}\geq 0\,,\;\;\sum\nolimits_{i=1}^{k}t_{i}\leq
1\right\}
\end{equation*}%
the \emph{standard }$k$\emph{-simplex} in $\mathbb{R}^{k}$. Additionally we
set the \emph{standard }$0$\emph{-simplex} as $\Delta ^{0}=\left\{ 0\right\} 
$.

Recall that $C^{\infty }\left( \mathbb{R}\times M\right) $-modules $\Gamma
\left( \func{pr}_{2}^{\ast }A\right) $ and $C^{\infty }\left( \mathbb{R}%
^{k}\times M\right) \otimes _{C^{\infty }\left( M\right) }\Gamma \left(
A\right) $ are isomorphic (see \cite{Higgins-Mackenzie}) and 
\begin{eqnarray*}
\func{pr}_{2}^{\;\wedge }\hspace{-0.1cm}\left( A\right) &=&\left\{ \left(
\gamma ,w\right) \in T\left( \mathbb{R}^{k}\times M\right) \times A:\left( 
\func{pr}_{2}\right) _{\ast }\gamma =\rho _{A}\left( w\right) \right\} \\
&\subset &T\left( \mathbb{R}^{k}\times M\right) \oplus \func{pr}_{2}^{\ast }A
\end{eqnarray*}%
is a Lie algebroid over $\mathbb{R}^{k}\times M$ with the projection on the
first factor as an anchor and the bracket $[\![\cdot ,\cdot ]\!]^{\wedge }$
given in Theorem \ref{theorem_the_inverse_image} on the module $\Gamma
\left( \func{pr}_{2}^{\;\wedge }\hspace{-0.1cm}\left( A\right) \right)
\subset \mathfrak{X}\left( \mathbb{R}^{k}\times M\right) \times \Gamma
\left( \func{pr}_{2}^{\ast }A\right) \cong \mathfrak{X}\left( \mathbb{R}%
^{k}\times M\right) \times C^{\infty }\left( \mathbb{R}^{k}\times M\right)
\otimes _{C^{\infty }\left( M\right) }\Gamma \left( A\right) $. The map%
\begin{equation*}
\Psi :T\mathbb{R}^{k}\times A\longrightarrow \func{pr}_{2}^{\;\wedge }%
\hspace{-0.1cm}\left( A\right) ,\ \ \ \left( u,w\right) \longmapsto \left(
u,\rho _{A}\left( w\right) ,w\right)
\end{equation*}%
is an isomorphism of Lie algebroids $T\mathbb{R}^{k}\times A$ and\ $\func{pr}%
_{2}^{\;\wedge }\hspace{-0.1cm}\left( A\right) $. In view of the
identification $\Gamma \left( T\mathbb{R}^{k}\times A\right) $\newline
$\simeq \Gamma \left( \func{pr}_{2}^{\;\wedge }\hspace{-0.1cm}\left(
A\right) \right) $ as $C^{\infty }\left( \mathbb{R}^{k}\times M\right) $%
-modules, we will treat $\Gamma \left( T\mathbb{R}^{k}\times A\right) $ as a 
$C^{\infty }\left( \mathbb{R}^{k}\times M\right) $-submodule of 
\begin{equation*}
\mathfrak{X}\left( \mathbb{R}^{k}\times M\right) \times \left( C^{\infty
}\left( \mathbb{R}^{k}\times M\right) \otimes _{C^{\infty }\left( M\right)
}\Gamma \left( A\right) \right) .
\end{equation*}%
The cross-section $\left( 0,\xi \right) $ of a vector bundle $T\mathbb{R}%
^{k}\times A$ will be simply denoted by $\xi $ and $(\frac{\partial }{%
\partial t^{j}},0)$ by $\frac{\partial }{\partial t^{j}}$. Define%
\begin{equation*}
\dint\nolimits_{\Delta ^{k}}:\mathcal{A}lt_{\mathbb{R}}^{\bullet }\left(
\Gamma \left( T\mathbb{R}^{k}\times A\right) ;C^{\infty }\left( \mathbb{R}%
^{k}\times M\right) \right) \longrightarrow \mathcal{A}lt_{\mathbb{R}%
}^{\bullet -k}\left( \Gamma \left( A\right) ;C^{\infty }\left( M\right)
\right) ,
\end{equation*}%
\begin{equation*}
\left( \int\nolimits_{\Delta ^{k}}\omega \right) \left( a_{1},\ldots
,a_{n-k}\right) =\int\nolimits_{\Delta ^{k}}\omega \left( \frac{\partial }{%
\partial t^{1}},\ldots ,\frac{\partial }{\partial t^{k}},a_{1},\ldots
,a_{n-k}\right) _{|\left( t_{1},...,t_{k},\bullet \right) }dt_{1}...dt_{k}
\end{equation*}%
for all $n\geq 1$, $1\leq k\leq n$, $\omega \in \mathcal{A}lt_{\mathbb{R}%
}^{n}\left( \Gamma \left( T\mathbb{R}^{k}\times A\right) ;C^{\infty }\left( 
\mathbb{R}^{k}\times M\right) \right) $,\ $a_{1}$,$...$,$a_{n-k}\in \Gamma
\left( A\right) $\ and%
\begin{equation*}
\left( \int\nolimits_{\Delta ^{0}}\omega \right) \left( a_{1},\ldots
,a_{n}\right) =\iota _{0}^{\ast }\left( \omega \left( 0\times a_{1},\ldots
,0\times a_{n}\right) \right) ,\ \ \ \int\nolimits_{\Delta ^{0}}f=\iota
_{0}^{\ast }f
\end{equation*}%
for all for $n\geq 1$, $\omega \in \mathcal{A}lt_{\mathbb{R}}^{n}\left(
\Gamma \left( T\mathbb{R}^{k}\times A\right) ;C^{\infty }\left( M\right)
\right) $, $a_{1},\ldots ,a_{n}\in \Gamma \left( A\right) $, $f\in C^{\infty
}\left( \mathbb{R}^{k}\times M\right) $ and where $\iota _{0}:M\rightarrow
\Delta ^{0}\times M$ is an inclusion defined by $\iota _{0}\left( x\right)
=\left( 0,x\right) $.

\begin{theorem}
\label{Stokes theorem for R-linear forms}\emph{(The Stokes theorem for }$%
\mathbb{R}$\emph{-linear forms) }For every $k\in \mathbb{N}$,%
\begin{equation}
\int\nolimits_{\Delta ^{k}}\circ \,d_{T\mathbb{R}^{k}\times A,\mathbb{R}%
}+\left( -1\right) ^{k+1}d_{A,\mathbb{R}}\circ \int\nolimits_{\Delta
^{k}}=\dsum\nolimits_{j=0}^{k}\left( -1\right) ^{j}\int\nolimits_{\Delta
^{k-1}}\circ \,\left( d\sigma _{j}^{k-1}\times \func{id}_{A}\right) ^{\ast },
\label{Stokes}
\end{equation}%
where $\sigma _{j}^{k}:\mathbb{R}^{k}\rightarrow \mathbb{R}^{k+1}$\ for $%
0\leq j\leq k+1$\ are functions defined by $\sigma _{0}^{0}\left( 0\right)
=1 $, $\sigma _{1}^{0}\left( 0\right) =0$, and for $\left( t_{1},\ldots
,t_{k}\right) \in \mathbb{R}^{k}$ by%
\begin{eqnarray*}
\sigma _{0}^{k}\left( t_{1},\ldots ,t_{k}\right) &=&\left(
1-\dsum\nolimits_{i=1}^{k}t_{i},t_{1},\ldots ,t_{k}\right) , \\
\sigma _{j}^{k}\left( t_{1},\ldots ,t_{k}\right) &=&\left( t_{1},\ldots
,t_{j-1},0,t_{j},\ldots ,t_{k}\right) ,\;\;1\leq j\leq k+1,
\end{eqnarray*}%
and where $\left( \left( \int\nolimits_{\Delta ^{k-1}}\circ \,\left( d\sigma
_{j}^{k-1}\times \func{id}_{A}\right) ^{\ast }\right) \omega \right) \left(
a_{1},\ldots ,a_{n-k+1}\right) $ is, by definition, equal to%
\begin{equation*}
\int\nolimits_{\Delta ^{k-1}}\omega \left( d\sigma _{j}^{k-1}\left( \frac{%
\partial }{\partial t^{1}}\right) ,\ldots ,d\sigma _{j}^{k-1}\left( \frac{%
\partial }{\partial t^{k-1}}\right) ,a_{1},\ldots ,a_{n-k+1}\right)
_{|\left( t_{1},\ldots ,t_{k-1},\bullet \right) }dt_{1}\ldots dt_{k-1}
\end{equation*}%
and 
\begin{equation*}
\left( \left( \int\nolimits_{\Delta ^{0}}\circ \,\left( d\sigma
_{j}^{0}\times \func{id}_{A}\right) ^{\ast }\right) \omega \right) \left(
a_{1},\ldots ,a_{n}\right) =\left( \sigma _{j}^{0}\times \func{id}_{M}\circ
\iota _{0}\right) ^{\ast }\left( \omega \left( a_{1},\ldots ,a_{n}\right)
\right)
\end{equation*}%
for all $k\geq 2$, $j\in \left\{ 0,1\right\} $, $\omega \in \mathcal{A}lt_{%
\mathbb{R}}^{n}\left( \Gamma \left( T\mathbb{R}^{k}\times A\right)
;C^{\infty }\left( \mathbb{R}^{k}\times M\right) \right) $, $a_{1},\ldots
,a_{n}\in \Gamma \left( A\right) $.
\end{theorem}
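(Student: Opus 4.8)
The plan is to fix a form $\omega \in \mathcal{A}lt_{\mathbb{R}}^{n}(\Gamma(T\mathbb{R}^{k}\times A);C^{\infty}(\mathbb{R}^{k}\times M))$ together with sections $a_{1},\ldots,a_{n-k+1}\in\Gamma(A)$, and to verify (\ref{Stokes}) after evaluating both sides on this tuple; by $\mathbb{R}$-multilinearity this is enough. Throughout I abbreviate $e_{j}=\frac{\partial}{\partial t^{j}}$ and write $G_{j}$ for the smooth function $\omega(e_{1},\ldots,\widehat{e_{j}},\ldots,e_{k},a_{1},\ldots,a_{n-k+1})\in C^{\infty}(\mathbb{R}^{k}\times M)$. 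First I would record the structure equations of $T\mathbb{R}^{k}\times A$ under the isomorphism $\Psi$: the anchor sends $e_{j}$ to the coordinate field $\frac{\partial}{\partial t^{j}}$ and sends $a_{l}=(0,a_{l})$ to $\rho_{A}(a_{l})$, viewed as a $t$-independent field tangent to $M$; and the brackets satisfy $[\![ e_{i},e_{j} ]\!]^{\wedge}=0$, $[\![ e_{j},a_{l} ]\!]^{\wedge}=0$ (since $a_{l}$ is constant in $t$), and $[\![ a_{l},a_{l'} ]\!]^{\wedge}=(0,[\![ a_{l},a_{l'} ]\!]_{A})$. These all follow from the Cartesian-product bracket and the formula in Theorem \ref{theorem_the_inverse_image}.

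Second, I would expand the two left-hand terms of (\ref{Stokes}) using the defining formula (\ref{d_A}). Applying (\ref{d_A}) to $(d_{T\mathbb{R}^{k}\times A,\mathbb{R}}\,\omega)(e_{1},\ldots,e_{k},a_{1},\ldots,a_{n-k+1})$ and invoking the structure equations, every bracket involving some $e_{j}$ drops out, leaving three groups: the anchor terms $(-1)^{j+1}\frac{\partial G_{j}}{\partial t^{j}}$ from the $e_{j}$, the anchor terms coming from the $\rho_{A}(a_{l})$, and the bracket terms carrying $[\![ a_{l},a_{l'} ]\!]_{A}$. On the other side, expanding $d_{A,\mathbb{R}}(\int_{\Delta^{k}}\omega)$ and pushing each $\rho_{A}(a_{l})$ through the integral over $\Delta^{k}$ (differentiation under the integral sign, licit since the integrand is smooth and $\Delta^{k}$ compact — this is exactly where the non-local character of $\omega$ does no harm, as one manipulates only the already-evaluated functions) produces matching anchor and bracket terms. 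A short sign count shows that the $\rho_{A}(a_{l})$-anchor terms cancel pairwise between $\int_{\Delta^{k}}\circ\,d$ and $(-1)^{k+1}d\circ\int_{\Delta^{k}}$, and likewise the $[\![ a_{l},a_{l'} ]\!]_{A}$-terms cancel (the latter after incurring the sign $(-1)^{k}$ from commuting the inserted bracket past $e_{1},\ldots,e_{k}$). Hence the entire left-hand side collapses to $\sum_{j=1}^{k}(-1)^{j+1}\int_{\Delta^{k}}\frac{\partial G_{j}}{\partial t^{j}}\,dt_{1}\cdots dt_{k}$, evaluated on the $a_{i}$.

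Third, I would compute these by the fundamental theorem of calculus on the simplex: integrating $\frac{\partial G_{j}}{\partial t^{j}}$ first over $t^{j}\in[0,1-\sum_{i\neq j}t_{i}]$ splits $\int_{\Delta^{k}}\frac{\partial G_{j}}{\partial t^{j}}$ into a contribution $F_{j}$ from the coordinate face $t^{j}=0$ and a contribution $H_{j}$ from the hypotenuse $\sum_{i}t_{i}=1$, so the left side becomes $\sum_{j=1}^{k}(-1)^{j+1}(H_{j}-F_{j})$. On the face $t^{j}=0$, since $d\sigma_{j}^{k-1}(e_{m})=e_{m}$ for $m<j$ and $=e_{m+1}$ for $m\geq j$, the integrand $G_{j}|_{t^{j}=0}$ is precisely the evaluation of $(d\sigma_{j}^{k-1}\times\func{id}_{A})^{\ast}\omega$; after relabelling the simplex coordinates this gives $F_{j}=\int_{\Delta^{k-1}}(d\sigma_{j}^{k-1}\times\func{id}_{A})^{\ast}\omega$, so $\sum_{j=1}^{k}(-1)^{j+1}(-F_{j})=\sum_{j=1}^{k}(-1)^{j}\int_{\Delta^{k-1}}(d\sigma_{j}^{k-1}\times\func{id}_{A})^{\ast}\omega$, matching the $1\leq j\leq k$ terms on the right of (\ref{Stokes}).

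Finally, the essential step is to match $\sum_{j=1}^{k}(-1)^{j+1}H_{j}$ with the remaining term $j=0$. Here I would first prove the alternating-form identity $\omega(e_{2}-e_{1},\ldots,e_{k}-e_{1},a_{1},\ldots)=\sum_{j=1}^{k}(-1)^{j-1}\omega(e_{1},\ldots,\widehat{e_{j}},\ldots,e_{k},a_{1},\ldots)$ by expanding and discarding the terms with a repeated $e_{1}$; since $d\sigma_{0}^{k-1}(e_{m})=e_{m+1}-e_{1}$, this rewrites the integrand of $\int_{\Delta^{k-1}}(d\sigma_{0}^{k-1}\times\func{id}_{A})^{\ast}\omega$ as $\sum_{j}(-1)^{j-1}(G_{j}\circ\sigma_{0}^{k-1})$. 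Separately, the hypotenuse is one geometric $(k-1)$-face parametrized in $k$ ways (eliminating $t^{j}$ for each $j$), and the change of variables between the $j$-th parametrization and the $\sigma_{0}^{k-1}$-parametrization has Jacobian determinant $(-1)^{j+1}$, hence unit absolute value; this gives $H_{j}=\int_{\Delta^{k-1}}(G_{j}\circ\sigma_{0}^{k-1})$. Combining, $\sum_{j}(-1)^{j+1}H_{j}=\int_{\Delta^{k-1}}\sum_{j}(-1)^{j-1}(G_{j}\circ\sigma_{0}^{k-1})=\int_{\Delta^{k-1}}(d\sigma_{0}^{k-1}\times\func{id}_{A})^{\ast}\omega$, exactly the $j=0$ term. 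The case $k=0$ is checked separately from the definitions involving $\iota_{0}$. I expect this hypotenuse step — fusing the alternating-sum identity with the signed change of variables while keeping every sign coherent — to be the main obstacle; the interior cancellations and the coordinate-face identification are comparatively routine once the structure equations are in hand.
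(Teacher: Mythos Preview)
Your proposal is correct and follows essentially the same route as the paper's own proof: record the vanishing brackets $[\![e_i,e_j]\!]=[\![e_j,a_l]\!]=0$, expand both left-hand summands with the Cartan formula so that all $\rho_A(a_l)$- and $[\![a_l,a_{l'}]\!]_A$-terms cancel, reduce to $\sum_j(-1)^{j+1}\int_{\Delta^k}\partial G_j/\partial t^j$, split via the fundamental theorem of calculus into coordinate-face and hypotenuse contributions, and match these with the $j\geq 1$ and $j=0$ boundary terms respectively --- the paper handles the hypotenuse in the same two moves you describe (the alternating expansion coming from $d\sigma_0^{k-1}(e_s)=e_{s+1}-e_1$, followed by explicit changes of variables $\phi_j:\Delta^{k-1}\to\Delta^{k-1}$ of unit Jacobian). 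One small slip: the separate base case is $k=1$, not $k=0$; the theorem is stated for $k\in\mathbb{N}$, and $\iota_0$ enters via $\int_{\Delta^0}$ on the right-hand side precisely when $k=1$.
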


\begin{proof}
In the proof of the theorem we make direct calculations because we can not
use local properties of forms as in the classical Stokes' theorem. Changes
of variables via suitable diffeomorphisms have been used. We can consider
two cases: $k=1$ and $k\geq 2$. The first case is left to the reader (we
have here straightforward calculations).

Let $k\geq 2$ and $n\geq k$ be natural numbers, $\Omega \in \mathcal{A}_{%
\mathbb{R}}^{n}\left( T\mathbb{R}^{k}\times A\right) $,\ $%
a_{0},\,...,\,a_{n-k}\in \Gamma \left( A\right) $, $\left( \;\tilde{t}%
^{1},\ldots ,\tilde{t}\,^{k}\right) =\func{id}_{\mathbb{R}^{k}}\;$be the
identity map on $\mathbb{R}^{k}$, \thinspace $t=\left( t_{1},\ldots
,t_{k}\right) \in \mathbb{R}^{k}$. Because of $[\![\frac{\partial }{\partial
\,\tilde{t}^{i}},a_{s}]\!]=0$, $[\![\frac{\partial }{\partial \,\tilde{t}%
^{\,p}},\frac{\partial }{\partial \,\tilde{t}^{\,q}}]\!]=0$, observe that

\begin{multline*}
\left( d_{T\mathbb{R}^{k}\times A,\mathbb{R}}\Omega \right) \left( \frac{%
\partial }{\partial \,\tilde{t}^{1}},\ldots ,\frac{\partial }{\partial 
\tilde{t}^{k}},a_{0},\ldots ,a_{n-k}\right) \\
\hspace{3cm}=\sum\nolimits_{j=1}^{k}\left( -1\right) ^{j+1}\frac{\partial }{%
\partial \,\tilde{t}^{\,j}}\left( \Omega \left( \frac{\partial }{\partial \,%
\tilde{t}^{1}},\ldots \widehat{j}\ldots ,\frac{\partial }{\partial \,\tilde{t%
}^{k}},a_{0},\ldots ,a_{n-k}\right) \right) \\
\ \hspace{4cm}+\sum\nolimits_{i=0}^{n-k}\left( -1\right) ^{k+i}\left( \rho
_{A}\circ a_{i}\right) \left( \Omega \left( \frac{\partial }{\partial \,%
\tilde{t}^{1}},\ldots ,\frac{\partial }{\partial \,\tilde{t}^{k}}%
,a_{0},\ldots \widehat{i}\ldots ,a_{n-k}\right) \right) \\
\ \hspace{3cm}{+\sum\nolimits_{i<j}\left( -1\right) ^{k+i+j}\Omega \left( 
\frac{\partial }{\partial \,\tilde{t}^{1}},\ldots ,\frac{\partial }{\partial
\,\tilde{t}^{k}},[\![a_{i},a_{j}]\!],a_{0},\ldots \widehat{i}\ldots \widehat{%
j}\ldots ,a_{n-k}\right) .}
\end{multline*}%
Furthermore,%
\begin{align*}
& d_{A,\mathbb{R}}\left( \int\nolimits_{\Delta ^{k}}\Omega \right) \left(
a_{0},\ldots ,a_{n-k}\right) \\
& =\int\nolimits_{\Delta ^{k}}\left( \sum\nolimits_{i=0}^{n-k}\left(
-1\right) ^{i}\left( \rho _{A}\circ a_{i}\right) \left( \Omega \left( \frac{%
\partial }{\partial \,\tilde{t}^{1}},\ldots ,\frac{\partial }{\partial \,%
\tilde{t}^{k}},a_{0},\ldots \widehat{i}\ldots ,a_{n-k}\right) \right)
\right) _{|\left( t,\bullet \right) }dt_{1}\ldots dt_{k} \\
& +\int\nolimits_{\Delta ^{k}}\left( \sum\nolimits_{i<j}\left( -1\right)
^{i+j}\Omega \left( \frac{\partial }{\partial \,\tilde{t}^{1}},\ldots ,\frac{%
\partial }{\partial \,\tilde{t}^{k}},[\![a_{i},a_{j}]\!],a_{0},\ldots 
\widehat{i}\ldots \widehat{j}\ldots ,a_{n-k}\right) \right) _{|\left(
t,\bullet \right) }\hspace{-0.2cm}dt_{1}\ldots dt_{k}\,.
\end{align*}%
Hence, putting%
\begin{equation*}
\Delta _{\left( \,j\right) }^{k-1}=\left\{ \left( t_{1},\ldots \widehat{j}%
\ldots ,t_{k}\right) \in \mathbb{R}^{k-1}\,;\;\;\forall i\;\;t_{i}\geq
0\,,\;\;t_{1}+\cdots \widehat{j}\cdots +t_{k}\leq 1\right\} ,
\end{equation*}%
we obtain%
\begin{align*}
& \left( \left( \int\nolimits_{\Delta ^{k}}\circ ~d_{T\mathbb{R}^{k}\times A,%
\mathbb{R}}+\left( -1\right) ^{k+1}d_{A,\mathbb{R}}\circ
\int\nolimits_{\Delta ^{k}}\right) \Omega \right) \left( a_{0},\ldots
,a_{n-k}\right) \\
& =\sum\limits_{j=1}^{k}\left( -1\right) ^{j+1}\int\nolimits_{\Delta ^{k}}%
\frac{\partial }{\partial \tilde{t}^{\,j}}\left( \Omega \hspace{-0.1cm}%
\left( \frac{\partial }{\partial \,\tilde{t}^{1}},\ldots \widehat{j}\ldots ,%
\frac{\partial }{\partial \,\tilde{t}^{k}},a_{0},\ldots ,a_{n-k}\right)
\right) _{|\left( t,\bullet \right) }dt_{1}\ldots dt_{k} \\
& =\sum\limits_{j=1}^{k}\left( -1\right) ^{j+1}\int\nolimits_{\Delta
_{\left( \,j\right) }^{k-1}}\left. \Omega \hspace{-0.1cm}\left( \hspace{%
-0.1cm}\frac{\partial }{\partial \,\tilde{t}^{1}},\ldots \widehat{j}\ldots ,%
\frac{\partial }{\partial \,\tilde{t}^{k}},a_{0},\ldots ,a_{n-k}\hspace{%
-0.1cm}\right) \right\vert _{(t_{1},\ldots ,t_{j-1},1-\sum\limits_{\underset{%
i\neq j}{i=1}}^{k}t_{i},t_{j+1},\ldots ,t_{k},\bullet )}\hspace{-0.9cm}%
dt_{1}\ldots \widehat{j}\ldots dt_{k} \\
& +\sum\limits_{j=1}^{k}\left( -1\right) ^{j+1}\int\nolimits_{\Delta
_{\left( \,j\right) }^{k-1}}\left. \Omega \hspace{-0.1cm}\left( \frac{%
\partial }{\partial \,\tilde{t}^{1}},\ldots \widehat{j}\ldots ,\frac{%
\partial }{\partial \,\tilde{t}^{k}},a_{0},\ldots ,a_{n-k}\right)
\right\vert _{\left( t_{1},...,t_{j-1},0,t_{j+1},...,t_{k},\bullet \right) }%
\hspace{-0.65cm}dt_{1}\ldots \widehat{j}\ldots dt_{k}.
\end{align*}%
On the other hand,%
\begin{eqnarray*}
&&\left( \left( \dsum\nolimits_{j=0}^{k}\left( -1\right)
^{j}\int\nolimits_{\Delta ^{k-1}}\circ \,\left( \sigma _{j}^{k-1}\times 
\func{id}_{M}\right) ^{\ast }\right) \,\Omega \right) \left( a_{0},\ldots
,a_{n-k}\right) \\
&=&\int\nolimits_{\Delta ^{k-1}}\Omega _{\left( 1-t_{1}-\cdots
-t_{k-1},t_{1},\ldots ,t_{k-1},\bullet \right) }\left( d\sigma
_{0}^{k-1}\left( \left. \frac{\partial }{\partial t^{1}}\right\vert _{\left(
t_{1},\ldots ,t_{k-1}\right) }\right) ,\ldots \right. \\
&&\left.
\;\;\;\;\;\;\;\;\;\;\;\;\;\;\;\;\;\;\;\;\;\;\;\;\;\;\;\;\;\;\;\;\;\;\;\;\;\;%
\;\;\;\;...\;,d\sigma _{0}^{k-1}\left( \left. \frac{\partial }{\partial
t^{k-1}}\right\vert _{\left( t_{1},\ldots ,t_{k-1}\right) }\right)
,a_{0\,},\ldots ,a_{n-k}\right) \,dt_{1}\ldots dt_{k-1} \\
&&+\dsum\nolimits_{j=1}^{k}\left( -1\right) ^{j}\int\nolimits_{\Delta
^{k-1}}\Omega _{\left( t_{1},\ldots ,t_{j-1},0,t_{j},\ldots ,t_{k-1},\bullet
\right) }\left( d\sigma _{j}^{k-1}\left( \left. \frac{\partial }{\partial
t^{1}}\right\vert _{\left( t_{1},\ldots ,t_{k-1}\right) }\right) ,\ldots
\,\right. \\
&&\left.
\;\;\;\;\;\;\;\;\;\;\;\;\;\;\;\;\;\;\;\;\;\;\;\;\;\;\;\;\;\;\;\;\;\;\;\;\;\;%
\;\;\;\;...\,,d\sigma _{j}^{k-1}\left( \left. \frac{\partial }{\partial
t^{k-1}}\right\vert _{\left( t_{1},\ldots ,t_{k-1}\right) }\right)
,a_{0\,},\ldots ,a_{n-k}\right) \,dt_{1}\ldots dt_{k-1}.
\end{eqnarray*}%
In view of the fact that 
\begin{equation}
d\sigma _{0}^{k-1}\left( \left. \frac{\partial }{\partial t^{s}}\right\vert
_{\left( t_{1},\ldots ,t_{k-1}\right) }\right) =\left. \left( -\,\frac{%
\partial }{\partial \,\tilde{t}\,^{1}}+\frac{\partial }{\partial \,\tilde{t}%
\,^{s+1}}\right) \right\vert _{\left(
1-\sum\limits_{i=1}^{k-1}t_{i},t_{1},\ldots ,t_{k-1}\right) }  \label{for1}
\end{equation}%
for all $1\leq s\leq k-1$, and for $1\leq j\leq k-1$%
\begin{equation}
d\sigma _{j}^{k-1}\left( \left. \frac{\partial }{\partial t^{s}}\right\vert
_{\left( t_{1},\ldots ,t_{k-1}\right) }\right) =\left\{ 
\begin{array}{l}
\left. \frac{\partial }{\partial \,\tilde{t}\,^{s}}\right\vert _{\left(
t_{1},\ldots ,t_{j-1},0,t_{j},\ldots ,t_{k-1}\right) },\ \text{if}\ 1\leq
s<j, \\ 
\; \\ 
\left. \frac{\partial }{\partial \,\tilde{t}\,^{s+1}}\right\vert _{\left(
t_{1},\ldots ,t_{j-1},0,t_{j},\ldots ,t_{k-1}\right) },\ \text{if}\ j\leq
s\leq k-1,%
\end{array}%
\right.  \label{for2}
\end{equation}%
and using the suitable transformation we see that the second term of the
above is equal to%
\begin{equation*}
\sum_{j=1}^{k}\left( -1\right) ^{j}\int\nolimits_{\Delta _{\left( j\right)
}^{k-1}}\left. \Omega \left( \frac{\partial }{\partial \tilde{t}^{1}},\ldots 
\widehat{j}\ldots ,\frac{\partial }{\partial \,\tilde{t}^{k}},a_{0},\ldots
,a_{n-k}\right) \right\vert _{\left( s_{1},\ldots ,s_{j-1},0,s_{j+1},\ldots
,s_{k},\bullet \right) }\hspace{-0.1cm}ds_{1}\ldots \widehat{j}\ldots ds_{k}.
\end{equation*}%
This is the second term of $\left( \int\nolimits_{\Delta ^{k}}d_{T\mathbb{R}%
^{k}\times A,\mathbb{R}}\Omega +\left( -1\right) ^{k+1}d_{A,\mathbb{R}%
}\left( \int\nolimits_{\Delta ^{k}}\Omega \right) \right) \left(
a_{0},\ldots ,a_{n-k}\right) $. We apply (\ref{for1}), (\ref{for2}) again
and deduce that%
\begin{multline*}
\left( -1\right) ^{0}\left( \left( \int\nolimits_{\Delta ^{k-1}}\circ
\,\left( \sigma _{0}^{k-1}\times \func{id}_{M}\right) ^{\ast }\right) \Omega
\right) \left( a_{0},\ldots ,a_{n-k}\right) \\
=\sum_{j=1}^{k}\left( -1\right) ^{j+1}\int\nolimits_{\Delta ^{k-1}}\left.
\Omega \left( \frac{\partial }{\partial \,\tilde{t}\,^{1}},\ldots \widehat{j}%
\ldots ,\frac{\partial }{\partial \,\tilde{t}\,^{k}},a_{0},\ldots
,a_{n-k}\right) \right\vert _{(1-\sum\limits_{i=1}^{k-1}t_{i},t_{1},\ldots
,t_{k-1},\bullet )}dt_{1}\ldots dt_{k-1}.
\end{multline*}%
Now, using transformations $\phi _{1}=\func{id}_{\Delta ^{k-1}}$, $\phi
_{j}:\Delta ^{k-1}\longrightarrow \Delta ^{k-1}$,%
\begin{eqnarray*}
\phi _{2}\left( t_{1}^{\prime },\ldots ,t_{k-1}^{\prime }\right) &=&\left(
1-\sum\nolimits_{i=1}^{k-1}t_{i}^{\prime },t_{2}^{\prime },\ldots
,t_{k-1}^{\prime }\right) , \\
\phi _{j}\left( t_{1}^{\prime },\ldots ,t_{k-1}^{\prime }\right) &=&\left(
t_{2}^{\prime },\ldots ,t_{j-1}^{\prime
},1-\sum\nolimits_{i=1}^{k-1}t_{i}^{\prime },t_{j}^{\prime },\ldots
,t_{k-1}^{\prime }\right) , \\
2 &<&j\leq k-1,
\end{eqnarray*}%
after a change of variables in the integral ($|J_{\phi _{j}}|=1$) we get 
\begin{align*}
& \sum_{j=1}^{k}\left( -1\right) ^{j+1}\int\limits_{\Delta ^{k-1}}\left.
\Omega \left( \frac{\partial }{\partial \,\tilde{t}\,^{1}},\ldots \widehat{j}%
\ldots ,\frac{\partial }{\partial \,\tilde{t}\,^{k}},a_{0},\ldots
,a_{n-k}\right) \right\vert _{(1-\sum\limits_{i=1}^{k}t_{i},t_{1},\ldots
,t_{k-1},\bullet )}dt_{1}\ldots dt_{k-1} \\
& =\sum_{j=1}^{k}\left( -1\right) ^{j+1}\int\limits_{\Delta ^{k-1}}\left.
\Omega \left( \frac{\partial }{\partial \,\tilde{t}\,^{1}},\ldots \widehat{j}%
\ldots ,\frac{\partial }{\partial \,\tilde{t}\,^{k}},a_{0},\ldots
,a_{n-k}\right) \right\vert _{(t_{1},\ldots
,t_{j-1},1-\sum\limits_{i=1}^{k-1}t_{i},t_{j},\ldots ,t_{k-1},\bullet )}%
\hspace{-0.7cm}dt_{1}\ldots dt_{k-1} \\
& =\sum_{j=1}^{k}\left( -1\right) ^{j+1}\int\limits_{\Delta ^{k-1}}\left.
\Omega \left( \frac{\partial }{\partial \,\tilde{t}\,^{1}},\ldots \widehat{j}%
\ldots ,\frac{\partial }{\partial \,\tilde{t}\,^{k}},a_{0},\ldots
,a_{n-k}\right) \right\vert _{(s_{1},\ldots
,1-\sum\limits_{i=1}^{k-1}s_{i},s_{j+1},\ldots ,s_{k},\bullet )}\hspace{-1cm}%
ds_{1}\ldots \widehat{j}\ldots ds_{k},
\end{align*}%
i.e. the first term of$\,\left( \int\nolimits_{\Delta ^{k}}d_{T\mathbb{R}%
^{k}\times A,\mathbb{R}}\Omega +\left( -1\right) ^{k+1}d_{A,\mathbb{R}%
}\left( \int\nolimits_{\Delta ^{k}}\Omega \right) \right) \left(
a_{0},\ldots ,a_{n-k}\right) $. We have thus proved 
\begin{equation*}
\int\nolimits_{\Delta ^{k}}d_{T\mathbb{R}^{k}\times A,\mathbb{R}}\Omega
+\left( -1\right) ^{k+1}d_{A,\mathbb{R}}\left( \int\nolimits_{\Delta
^{k}}\Omega \right) =\dsum\nolimits_{j=0}^{k}\left( -1\right)
^{j}\int\nolimits_{\Delta ^{k-1}}\left( \sigma _{j}^{k-1}\times \func{id}%
_{M}\right) ^{\ast }\Omega \,.
\end{equation*}
\end{proof}

If we restrict the discussion to differential (linear) forms in (\ref{Stokes}%
), on the right side of (\ref{Stokes}) we obtain operators of pullback of
forms.

Let 
\begin{equation*}
\widetilde{\dint }_{\Delta ^{k}}=\left. \dint\nolimits_{\Delta
^{k}}\right\vert \Omega \left( T\mathbb{R}^{k}\times A\right) :\Omega \left(
T\mathbb{R}^{k}\times A\right) \longrightarrow \Omega \left( A\right)
\end{equation*}%
be a restriction of $\dint\nolimits_{\Delta ^{k}}$ to the module $\Omega
\left( T\mathbb{R}^{k}\times A\right) $ of differential forms on the Lie
algebroid $T\mathbb{R}^{k}\times A$. Therefore, as a corollary we obtain the
Stokes theorem for differential forms on Lie algebroids (see also \cite%
{Vaisman}).

\begin{theorem}
\label{Stokes_linear}\emph{(The Stokes theorem for differential forms on Lie
algebroids) }For every $k\in \mathbb{N}$,%
\begin{equation}
\widetilde{\dint }_{\Delta ^{k}}\circ \,d_{T\mathbb{R}^{k}\times A}+\left(
-1\right) ^{k+1}d_{A}\circ \widetilde{\dint }_{\Delta
^{k}}=\dsum\nolimits_{j=0}^{k}\left( -1\right) ^{j}\widetilde{\dint }%
_{\Delta ^{k-1}}\circ \,\left( d\sigma _{j}^{k-1}\times \func{id}_{A}\right)
^{\ast },
\end{equation}%
where $\sigma _{j}^{k}:\mathbb{R}^{k}\rightarrow \mathbb{R}^{k+1}$\ for $%
0\leq j\leq k$\ are functions defined in Theorem \ref{Stokes theorem for
R-linear forms} and $\left( d\sigma _{j}^{k-1}\times \func{id}_{A}\right)
^{\ast }:\Omega \left( T\mathbb{R}^{k}\times A\right) \rightarrow \Omega
\left( T\mathbb{R}^{k-1}\times A\right) $ is the pullback of forms via the
homomorphism of Lie algebroids $d\sigma _{j}^{k-1}\times \func{id}_{A}:T%
\mathbb{R}^{k-1}\times A\rightarrow T\mathbb{R}^{k}\times A$ over $\sigma
_{j}^{k-1}\times \func{id}_{M}$.
\end{theorem}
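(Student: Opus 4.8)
The plan is to obtain Theorem~\ref{Stokes_linear} as a straightforward restriction of the already-established Theorem~\ref{Stokes theorem for R-linear forms} to the submodule $\Omega(T\mathbb{R}^k\times A)$ of genuine differential forms. The identity~(\ref{Stokes}) involves three families of operators: the exterior differentials $d_{T\mathbb{R}^k\times A,\mathbb{R}}$, $d_{A,\mathbb{R}}$; the integration operators $\int_{\Delta^k}$; and the operators $(d\sigma_j^{k-1}\times\func{id}_A)^\ast$. For the differentials there is nothing to prove: since $d_{A,\mathbb{R}}$ was defined by the \emph{same} formula~(\ref{d_A}) as $d_A$, it carries $C^\infty$-linear forms to $C^\infty$-linear forms, so its restriction to $\Omega(T\mathbb{R}^k\times A)$ (resp. $\Omega(A)$) is exactly $d_{T\mathbb{R}^k\times A}$ (resp. $d_A$), and $\Omega(T\mathbb{R}^k\times A)$ is a subcomplex. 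Thus the whole argument reduces to checking that the remaining two families of operators also restrict correctly.

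The first—and main—point is to verify that $\int_{\Delta^k}$ maps $\Omega^n(T\mathbb{R}^k\times A)$ into $\Omega^{n-k}(A)$, so that $\widetilde{\int}_{\Delta^k}$ is well defined. I would take $\omega\in\Omega^n(T\mathbb{R}^k\times A)$, i.e. a form that is $C^\infty(\mathbb{R}^k\times M)$-multilinear and alternating, and show $\int_{\Delta^k}\omega$ is $C^\infty(M)$-multilinear and alternating in the arguments $a_1,\dots,a_{n-k}\in\Gamma(A)$. Antisymmetry is inherited verbatim. For linearity, replace $a_i$ by $f\cdot a_i$ with $f\in C^\infty(M)$: since $f$ enters the bundle $T\mathbb{R}^k\times A$ as the pulled-back function $f\circ\func{pr}_2\in C^\infty(\mathbb{R}^k\times M)$, the $C^\infty(\mathbb{R}^k\times M)$-linearity of $\omega$ produces the scalar factor $(f\circ\func{pr}_2)_{|(t,\bullet)}=f(\bullet)$, which is constant in $t$ and hence can be pulled out of $\int_{\Delta^k}(\cdot)\,dt_1\cdots dt_k$. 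This gives $(\int_{\Delta^k}\omega)(\dots,f\cdot a_i,\dots)=f\cdot(\int_{\Delta^k}\omega)(\dots,a_i,\dots)$, establishing the claim; $\widetilde{\int}_{\Delta^k}$ is then simply this restriction.

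The second point is the identification on the right-hand side. On differential forms the $\mathbb{R}$-linear operator $(d\sigma_j^{k-1}\times\func{id}_A)^\ast$ coincides with the geometric pullback of differential forms along the Lie-algebroid homomorphism $d\sigma_j^{k-1}\times\func{id}_A\colon T\mathbb{R}^{k-1}\times A\to T\mathbb{R}^k\times A$ over $\sigma_j^{k-1}\times\func{id}_M$, which—by the recalled result of Kubarski—does send $\Omega(T\mathbb{R}^k\times A)$ into $\Omega(T\mathbb{R}^{k-1}\times A)$. Indeed, by the Definition of the pullback of a form, for $\omega\in\Omega^n(T\mathbb{R}^k\times A)$ the form $(d\sigma_j^{k-1}\times\func{id}_A)^\ast\omega$ evaluated on $\tfrac{\partial}{\partial t^1},\dots,\tfrac{\partial}{\partial t^{k-1}},a_1,\dots$ equals $\omega\big(d\sigma_j^{k-1}(\tfrac{\partial}{\partial t^1}),\dots,d\sigma_j^{k-1}(\tfrac{\partial}{\partial t^{k-1}}),a_1,\dots\big)$, because the homomorphism acts by $d\sigma_j^{k-1}$ on the $T\mathbb{R}^{k-1}$-factor and by the identity on $A$. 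This is precisely the integrand in the definition of the right-hand operator of~(\ref{Stokes}), so $\widetilde{\int}_{\Delta^{k-1}}\circ(d\sigma_j^{k-1}\times\func{id}_A)^\ast$ is the restriction of $\int_{\Delta^{k-1}}\circ(d\sigma_j^{k-1}\times\func{id}_A)^\ast$.

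With all three families recognized as restrictions of their $\mathbb{R}$-linear counterparts to $\Omega(T\mathbb{R}^k\times A)$, restricting identity~(\ref{Stokes}) to this submodule yields the stated formula verbatim. The only genuine obstacle is the well-definedness established in the second paragraph—that integration over the simplex does not destroy $C^\infty(M)$-linearity—and this is settled by the single observation that functions on the base are constant along the simplex directions and therefore pass freely through the integral; everything else is bookkeeping.
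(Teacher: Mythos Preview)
Your proposal is correct and follows exactly the paper's approach: the paper presents Theorem~\ref{Stokes_linear} as an immediate corollary of Theorem~\ref{Stokes theorem for R-linear forms}, obtained by restricting the identity~(\ref{Stokes}) to the submodule of $C^{\infty}$-linear forms, and you have simply filled in the details (well-definedness of $\widetilde{\int}_{\Delta^k}$ and identification of the right-hand side with the geometric pullback) that the paper leaves implicit.
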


\section{Homotopy Operators}

\begin{definition}
\emph{\cite{Kubarski-invariant} Let }$\left( A,\rho _{A},[\![\cdot ,\cdot
]\!]_{A}\right) $\emph{\ and }$\left( B,\rho _{B},[\![\cdot ,\cdot
]\!]_{B}\right) $\emph{\ be Lie algebroids on manifolds }$M$\emph{\ and }$N$%
\emph{,\ respectively. A }homotopy\emph{\ joining two homomorphisms }$\Phi
_{0}:A\rightarrow B$\emph{, }$\Phi _{1}:A\rightarrow B$\emph{\ of Lie
algebroids is a homomorphism of Lie algebroids}%
\begin{equation*}
\Phi :T\mathbb{R}\times A\longrightarrow B
\end{equation*}%
\emph{with }$\Phi \left( \theta _{0},\cdot \right) =\Phi _{0}$\emph{\ and }$%
\Phi \left( \theta _{1},\cdot \right) =\Phi _{1}$\emph{, where} $\theta
_{0}\in T_{0}\mathbb{R}$ \emph{and} $\theta _{1}\in T_{1}\mathbb{R}$ \emph{%
are null vectors;} $T\mathbb{R}\times A$ \emph{denotes the Cartesian product
of Lie algebroids }$T\mathbb{R}$ \emph{and }$A$\emph{. If there exists a
homotopy joining two homomorphisms, we say that these homomorphisms are }%
homotopic.
\end{definition}

As a corollary from Theorem \ref{Stokes_linear} we obtain the following
result which is a generalization of the result for regular Lie algebroids
from \cite{Kubarski-invariant}.

\begin{theorem}
\label{operatorhomotopii}\emph{\ }Let $\left( A,\rho _{A},[\![\cdot ,\cdot
]\!]_{A}\right) $ and $\left( B,\rho _{B},[\![\cdot ,\cdot ]\!]_{B}\right) $
be Lie algebroids on a manifold $M$ and $\Phi _{0}:A\rightarrow B$, $\Phi
_{1}:A\rightarrow B$ homomorphisms of Lie algebroids. If $\Phi :T\mathbb{R}%
\times A\rightarrow B$ is a homotopy joining $\Phi _{0}$ to $\Phi _{1}$, then%
\begin{equation*}
h=\int\nolimits_{\Delta ^{1}}\circ \,\,\Phi ^{\ast }:\Omega \left( B\right)
\longrightarrow \Omega \left( A\right)
\end{equation*}%
is a chain operator joining $\Phi _{0}^{\ast }:\Omega \left( B\right)
\rightarrow \Omega \left( T\mathbb{R}\times A\right) $ to $\Phi _{1}^{\ast
}:\Omega \left( B\right) \rightarrow \Omega \left( T\mathbb{R}\times
A\right) $, i.e. 
\begin{equation*}
h\circ d_{B}+d_{A}\circ h=\Phi _{1}^{\ast }-\Phi _{0}^{\ast }.
\end{equation*}
\end{theorem}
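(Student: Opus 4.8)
The plan is to obtain this statement by specializing the Stokes formula of Theorem~\ref{Stokes_linear} to $k=1$ and composing with the pullback of the homotopy. Since $h=\dint\nolimits_{\Delta^{1}}\circ\,\Phi^{\ast}$ acts on $\Omega\left(B\right)$ and $\Phi^{\ast}$ carries $\Omega\left(B\right)$ into $\Omega\left(T\mathbb{R}\times A\right)$, we may replace $\dint\nolimits_{\Delta^{1}}$ by its restriction $\widetilde{\dint}_{\Delta^{1}}$ to differential forms. For $k=1$ the sign $\left(-1\right)^{k+1}$ equals $+1$, and $\sigma_{0}^{0}\left(0\right)=1$, $\sigma_{1}^{0}\left(0\right)=0$, so Theorem~\ref{Stokes_linear} reads
\begin{equation*}
\widetilde{\dint}_{\Delta^{1}}\circ d_{T\mathbb{R}\times A}+d_{A}\circ\widetilde{\dint}_{\Delta^{1}}=\widetilde{\dint}_{\Delta^{0}}\circ\left(d\sigma_{0}^{0}\times\func{id}_{A}\right)^{\ast}-\widetilde{\dint}_{\Delta^{0}}\circ\left(d\sigma_{1}^{0}\times\func{id}_{A}\right)^{\ast}.
\end{equation*}
I would then compose this identity with $\Phi^{\ast}$ on the right. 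Because $\Phi$ is a homomorphism of Lie algebroids we have $\Phi^{\ast}\circ d_{B}=d_{T\mathbb{R}\times A}\circ\Phi^{\ast}$, so the left-hand side becomes $\widetilde{\dint}_{\Delta^{1}}\circ\Phi^{\ast}\circ d_{B}+d_{A}\circ\widetilde{\dint}_{\Delta^{1}}\circ\Phi^{\ast}=h\circ d_{B}+d_{A}\circ h$, exactly the expression we want to evaluate.

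Next I would identify the two boundary terms with the endpoint pullbacks. Using the identification $T\mathbb{R}^{0}\times A\cong A$ (as $\mathbb{R}^{0}=\left\{0\right\}$), the homomorphism $d\sigma_{0}^{0}\times\func{id}_{A}$ sends $a\in A$ to $\left(\theta_{1},a\right)\in T\mathbb{R}\times A$ and $d\sigma_{1}^{0}\times\func{id}_{A}$ sends $a$ to $\left(\theta_{0},a\right)$, where $\theta_{0}\in T_{0}\mathbb{R}$ and $\theta_{1}\in T_{1}\mathbb{R}$ are the null vectors from the definition of a homotopy. The endpoint conditions $\Phi\left(\theta_{1},\cdot\right)=\Phi_{1}$ and $\Phi\left(\theta_{0},\cdot\right)=\Phi_{0}$ then give $\Phi\circ\left(d\sigma_{0}^{0}\times\func{id}_{A}\right)=\Phi_{1}$ and $\Phi\circ\left(d\sigma_{1}^{0}\times\func{id}_{A}\right)=\Phi_{0}$, so by the functoriality $\left(\Phi\circ\Psi\right)^{\ast}=\Psi^{\ast}\circ\Phi^{\ast}$ the composed pullbacks are $\left(d\sigma_{0}^{0}\times\func{id}_{A}\right)^{\ast}\circ\Phi^{\ast}=\Phi_{1}^{\ast}$ and $\left(d\sigma_{1}^{0}\times\func{id}_{A}\right)^{\ast}\circ\Phi^{\ast}=\Phi_{0}^{\ast}$.

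Finally I would check that $\widetilde{\dint}_{\Delta^{0}}$ introduces no further factor: by its definition $\dint\nolimits_{\Delta^{0}}$ is just the restriction $\iota_{0}^{\ast}$ along $\iota_{0}:M\to\left\{0\right\}\times M$, which is the identity under $\left\{0\right\}\times M\cong M$. Combining the three steps, the right-hand side of the composed identity collapses to $\Phi_{1}^{\ast}-\Phi_{0}^{\ast}$, giving $h\circ d_{B}+d_{A}\circ h=\Phi_{1}^{\ast}-\Phi_{0}^{\ast}$ as claimed. I expect the only delicate point to be the bookkeeping at the degenerate $0$-simplex: verifying that the two face maps $\sigma_{0}^{0},\sigma_{1}^{0}$ are matched to the correct endpoints $\Phi_{1},\Phi_{0}$ with the signs coming out of the $k=1$ Stokes formula, and that $\widetilde{\dint}_{\Delta^{0}}$ really acts trivially, so that the signed sum of faces reproduces precisely $\Phi_{1}^{\ast}-\Phi_{0}^{\ast}$ rather than its negative.
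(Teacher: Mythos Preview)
Your proposal is correct and follows essentially the same route as the paper's own proof: specialize the Stokes formula of Theorem~\ref{Stokes_linear} to $k=1$, compose on the right with $\Phi^{\ast}$, use $d_{T\mathbb{R}\times A}\circ\Phi^{\ast}=\Phi^{\ast}\circ d_{B}$, and identify the two face terms with $\Phi_{1}^{\ast}$ and $\Phi_{0}^{\ast}$ via $\sigma_{0}^{0}(0)=1$, $\sigma_{1}^{0}(0)=0$ together with the endpoint conditions of the homotopy. Your treatment of the degenerate $0$-simplex and the sign bookkeeping is in fact more explicit than the paper's.
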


\begin{proof}
Since for $j\in \left\{ 1,2\right\} $, $\Phi _{j}=\Phi \left( \theta
_{j},\cdot \right) $, $\left( d\sigma _{j}^{0}\times \func{id}_{A}\right)
^{\ast }\circ \Phi ^{\ast }=\left( \Phi \circ d\sigma _{j}^{0}\times \func{id%
}_{A}\right) ^{\ast }$ where $\sigma _{0}^{0}$,$\,\sigma _{1}^{0}:\left\{
0\right\} \rightarrow \Delta ^{1}=\left[ 0,1\right] $\ are functions defined
by $\sigma _{0}^{0}\left( 0\right) =1$, $\sigma _{1}^{0}\left( 0\right) =0$
and $\left( \Phi \circ d\sigma _{j}^{0}\times \func{id}_{A}\right) ^{\ast
}\left( 0\times a\right) =\Phi _{1-j}\left( a\right) $ for all $a\in \Gamma
\left( A\right) $, we obtain%
\begin{equation*}
\int\nolimits_{\Delta ^{0}}\circ \left( d\sigma _{j}^{0}\times \func{id}%
_{A}\right) ^{\ast }\circ \Phi ^{\ast }=\Phi _{1-j}^{\ast }.
\end{equation*}%
From the above, the Stokes formula (Theorem \ref{Stokes_linear} for $k=1$)
and the commutativity of a pullback of differential forms on the Lie
algebroid via a homomorphism of Lie algebroids with differentials, we
conclude that: 
\begin{eqnarray*}
\Phi _{1}^{\ast }-\Phi _{0}^{\ast } &=&\left( \Phi \circ d\sigma
_{0}^{0}\times \func{id}_{A}\right) ^{\ast }-\left( \Phi \circ d\sigma
_{1}^{0}\times \func{id}_{A}\right) ^{\ast } \\
&=&\left( \left( d\sigma _{0}^{0}\times \func{id}_{A}\right) ^{\ast }-\left(
d\sigma _{j}^{0}\times \func{id}_{A}\right) ^{\ast }\right) \circ \Phi
^{\ast } \\
&=&\left( \int\nolimits_{\Delta ^{1}}\circ \,d_{T\mathbb{R}\times A}+\left(
-1\right) ^{1+1}d_{A}\circ \int\nolimits_{\Delta ^{1}}\right) \circ \Phi
^{\ast } \\
&=&\int\nolimits_{\Delta ^{1}}\circ \,\,\left( d_{T\mathbb{R}\times A}\circ
\Phi ^{\ast }\right) +d_{A}\circ \left( \int\nolimits_{\Delta ^{1}}\circ
\,\,\Phi ^{\ast }\right) \\
&=&\left( \int\nolimits_{\Delta ^{1}}\circ \,\,\Phi ^{\ast }\right) \circ
d_{B}+d_{A}\circ \left( \int\nolimits_{\Delta ^{1}}\circ \,\,\Phi ^{\ast
}\right) \\
&=&h\circ d_{B}+d_{A}\circ h.
\end{eqnarray*}
\end{proof}

\begin{remark}
\emph{The projection on the second factor} $\pi :T\mathbb{R}^{k}\times
A\rightarrow A$ \emph{is a homomorphism of Lie algebroids over} $\func{pr}%
_{2}$\emph{. Moreover,} $G_{0}:A\rightarrow T\mathbb{R}^{k}\times A$, $%
G_{0}\left( a\right) =\left( \Theta _{0},a\right) $\emph{, where} $\Theta
_{0}\in T_{0}\mathbb{R}^{k}$ \emph{is the null vector tangent to} $\mathbb{R}%
^{k}$ \emph{at the zero point, is a homomorphism of Lie algebroids over }$%
j_{0}:M\rightarrow \mathbb{R}^{k}\times M$, $j_{0}\left( x\right) =\left(
0,x\right) $\emph{. Since} $\pi \circ G_{0}=\limfunc{id}\nolimits_{A}$\emph{%
, then }$G_{0}^{\ast }:\Omega ^{\bullet }\left( T\mathbb{R}^{k}\times
A\right) \rightarrow \Omega ^{\bullet }\left( A\right) $\emph{\ induces the
homomorphism }$G_{0}^{\#}:H^{\bullet }\left( T\mathbb{R}^{k}\times A\right)
\rightarrow H^{\bullet }\left( A\right) $ \emph{in cohomology such that}%
\begin{equation}
G_{0}^{\#}\circ \pi ^{\#}=\limfunc{id}\nolimits_{H^{\bullet }\left( A\right)
}.  \label{G_0}
\end{equation}%
\emph{Consider }$f:\mathbb{R}\times \mathbb{R}^{k}\rightarrow \mathbb{R}^{k}$%
, $f\left( s,t\right) =s\cdot t$\emph{. Since} $df:T\left( \mathbb{R}\times 
\mathbb{R}^{k}\right) =T\mathbb{R}\times T\mathbb{R}^{k}\rightarrow T\mathbb{%
R}^{k}$\emph{\ is a homomorphism of Lie algebroids over }$f$\emph{, then }$%
\Phi =df\times \limfunc{id}\nolimits_{A}:T\mathbb{R}\times \left( T\mathbb{R}%
^{k}\times A\right) \rightarrow T\mathbb{R}^{k}\times A$\emph{\ is a
homomorphism of Lie algebroids over }$f\times \limfunc{id}\nolimits_{M}$%
\emph{\ which is a homotopy joining} $G_{0}\circ \pi $\emph{\ to} $\limfunc{%
id}\nolimits_{T\mathbb{R}\times A}$\emph{. According to Theorem \ref%
{operatorhomotopii}, we conclude that there exists a chain operator }$%
h:\Omega \left( A\right) \rightarrow \Omega \left( T\mathbb{R}^{k}\times
A\right) $ \emph{joining} $\left( G_{0}\circ \pi \right) ^{\ast }=\pi ^{\ast
}\circ G_{0}^{\ast }$\emph{\ to }$\limfunc{id}\nolimits_{T\mathbb{R}%
^{k}\times A}^{\ast }$, 
\begin{equation*}
h\circ d_{T\mathbb{R}^{k}\times A}+d_{T\mathbb{R}^{k}\times A}\circ h=%
\limfunc{id}\nolimits_{T\mathbb{R}^{k}\times A}^{\ast }-\pi ^{\ast }\circ
G_{0}^{\ast }.
\end{equation*}%
\emph{Therefore }$\limfunc{id}\nolimits_{H^{\bullet }\left( T\mathbb{R}%
^{k}\times A\right) }-\pi ^{\#}\circ G_{0}^{\#}$\emph{\ is the zero-map in
cohomology. From this and (\ref{G_0}) we deduce that}%
\begin{equation*}
H^{\bullet }\left( A\right) \cong H^{\bullet }\left( T\mathbb{R}^{k}\times
A\right) .
\end{equation*}
\end{remark}

\end{document}